\newcommand{\Ex}{\,{\mathbb{E}}}
\newcommand{\Prob}{{\mathbb{P}}}
\newcommand{\N}{{\mathbb N}}
\newcommand{\p}{{\mathbb P}}
\providecommand{\eps}{}
\renewcommand{\eps}{\varepsilon}
\newtheorem{thm}{Theorem}
\newtheorem{lem}[thm]{Lemma}
\theoremstyle{remark}
\newtheorem{remark}[thm]{Remark}
\definecolor{clou}{rgb}{0.8,0.25,0.5125}
\begin{document}


\title{Patricia's Bad Distributions}

\author[Addario-Berry]{Louigi Addario--Berry}
\address{Department of Mathematics and Statistics, McGill University, Montr\'eal, Canada}
\email{louigi.addario@mcgill.ca}

\author[Morin]{Pat Morin}
\address{School of Computer Science, Carleton University, Ottawa Ontario Canada}
\email{morin@scs.carleton.ca}

\author[Neininger]{Ralph Neininger}
\address{Institute of Mathematics, Goethe University Frankfurt, Frankfurt a.M., Germany}
\email{neininger@math.uni-frankfurt.de}

\begin{abstract}
The height of a random PATRICIA tree built from independent, identically distributed infinite binary strings with arbitrary diffuse probability distribution $\mu$ on $\{0,1\}^\N$ is studied. We show that the expected height grows asymptotically sublinearly in the number of leaves for any such $\mu$, but can be made to exceed any specific sublinear growth rate by choosing $\mu$ appropriately.
\end{abstract}

\subjclass[2010]{68P05, 60C05, 68R15, 68P10}
\date{May 18, 2024}

\maketitle



\section{Introduction and results}
The PATRICIA tree is a space efficient data structure for strings which improves on the trie. For the purpose of this note it is sufficient to introduce these tree structures for binary strings: Label the nodes of the complete infinite rooted binary tree by the elements of $\cup_{k=0}^\infty\{0,1\}^k$, starting at the root with $\emptyset$ and left and right child of a node labelled $v\in \{0,1\}^k$ with $v0$ and $v1$, respectively. Here, for $v\in\{0,1\}^k$ with $v=(v_1,\ldots,v_k)$ we abbreviate $v$ as $v=v_1\ldots v_k$ and denote $vi:=v_1v_2\ldots v_ki$ for $i=0,1$.

The coming definitions are depicted in Figure~\ref{fig_1}.
For distinct infinite binary strings $x_1,\ldots,x_n\in\{0,1\}^\N$ a finite tree called a {\em trie} (or {\em radix search tree}) to represent the strings $x_1,\ldots,x_n$ is constructed by first associating with each $x_i$ the infinite path in $\cup_{k=0}^\infty\{0,1\}^k$ consisting of the nodes whose labels are the prefixes of $x_i$. The node labelled with the shortest such prefix that is not a prefix of any $x_j$ with $j\in\{1,\ldots,n\}\setminus\{i\}$ becomes a leaf in the trie representing string $x_i$ for $i=1,\ldots,n$. The resulting tree, which is a finite binary tree with $n$ leaves, is the trie representing $x_1,\ldots,x_n$. Next, starting from the trie, all vertices with out-degree $1$ (i.e.\ with exactly one child) are deleted and the resulting gaps are closed by merging the two nodes which formed a deleted edge. This results in the PATRICIA tree, which was introduced independently by Morrison \cite{mo68} and Gwehenberger \cite{gw68} and first systematically analysed by Knuth \cite{kn73}.  The PATRICIA tree contains all the information needed to retrieve the strings and to perform  operations such as sorting, searching and selecting; for broad expositions, see \cite{kn98, ma92, sz01}.

PATRICIA trees have been analysed assuming various probabilistic models for the input strings; where usually the infinite strings are assumed to be independent and identically distributed over $\{0,1\}^\N$. Note that atoms of such a distribution result in identical strings with positive probability, and in this case the construction of the trie does not lead to a finite tree. Hence, the law of the strings is usually assumed to be diffuse (non-atomic). Special cases of such diffuse probability distributions have been considered in the analysis of algorithms on strings such as the Bernoulli models, Markov model, dynamical sources or the density model; see \cite{pi85,pi86,de92,clflva01,ja12,fuhwza13,lenesz13,ak22,ja22,isch23} and the references given in these papers.

\begin{center}\label{fig_1}
\includegraphics[width=11cm]{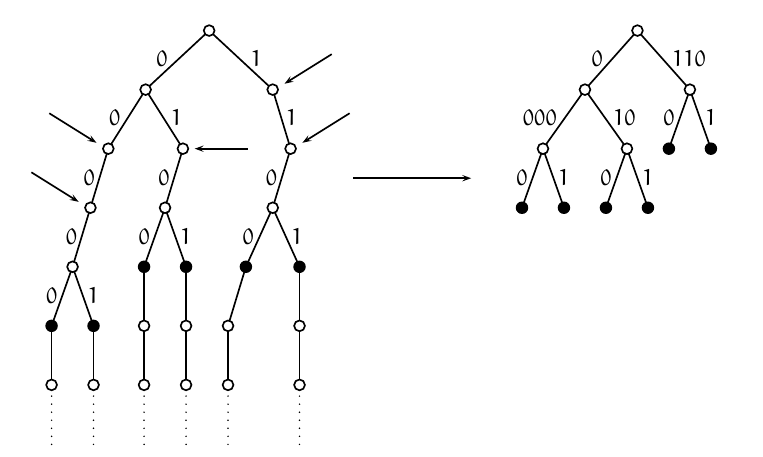}
\captionof{figure}{On the left the trie for the strings $00000\ldots$, $00001\ldots$, $0100\ldots$, $0101\ldots$, $1100\ldots$, and $1101\ldots$ is shown. Its leaves are the full black vertices, the indicated children of the full black vertices do not belong to the trie. Vertices with out-degree $1$ within the trie are indicated by arrows. On the right the resulting PATRICIA tree by deleting corresponding edges is shown.}
\end{center}

In the present note we focus on the height of a PATRICIA tree, which is the maximal (graph) distance of any leave from the root. The asymptotic behavior of the height of tries and PATRICIA trees under the Bernoulli models is covered by Pittel \cite{pi85,pi86} and
Devroye \cite{de92b,de98}. For example, for the height $H^\mathrm{syB}_n$ of the PATRICIA tree constructed from $n$ independent strings under the symmetric Bernoulli model, i.e.~all bits being independent and Bernoulli$(\frac{1}{2})$ distributed, Pittel \cite{pi85} obtained as $n\to\infty$ that
\begin{align*}
\frac{H^\mathrm{syB}_n}{\log n} \to 1  \mbox{ almost surely.}
\end{align*}
This shows an asymptotic 50\% improvement of the PATRICIA tree over the trie, for which the limit constant for the same probabilistic model is $2$ instead of $1$. For general diffuse laws concentration of the height of PATRICIA trees is studied (assuming only independence of the infinite strings not necessarily identical distribution) by Devroye \cite{de05} based on results from \cite{boluma00}; see also \cite{knsz02} for concentration of the height of PATRICIA trees in the Bernoulli model.

While such studies aim to show that the height behaves well with respect to applications from algorithms, Evans and Wakolbinger \cite{evwa17,evwa20} studied these random tree structures as  tree-valued transient Markov chains from the perspective of Doob--Martin boundary theory. They asked (private communication) how high PATRICIA trees can grow for arbitrary diffuse probability distributions of the strings (see \cite[Section 5]{evwa20} for specific examples). The subject of the present note is to answer this question by Theorems \ref{thm_1} and \ref{thm_2}:  The expected height grows always sublinearly, but can be made to exceed any fixed sublinear growth rate by the choice of an appropriate diffuse law.

For a diffuse probability distribution $\mu$ on $\{0,1\}^\N$ and $(\Xi^{(j)})_{j \in \N}$ a sequence of independent and identically distributed random strings with law $\mu$ we denote by $H^\mu_n$ the height of the PATRICIA tree constructed from $\Xi^{(1)},\ldots,\Xi^{(n)}$.
\begin{thm}\label{thm_1}
For all diffuse probability distributions $\mu$ on $\{0,1\}^\N$ we have, as $n\to\infty$, that
\begin{align*}
\frac{\Ex[H^\mu_n]}{n}\to 0, \quad\mbox{ and } \quad \frac{H^\mu_n}{n} \to 0 \mbox{ almost surely.}
\end{align*}
\end{thm}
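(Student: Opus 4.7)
Fix $K \geq 1$ and, for $y \in \{0,1\}^K$, set $B_y = \{i : \Xi^{(i)}_1\ldots\Xi^{(i)}_K = y\}$ and $q_y = \mu(C_y)$, where $C_y = \{x \in \{0,1\}^\N : x_1\ldots x_K = y\}$. My first step is to establish the deterministic bound
\begin{equation*}
H_n^\mu \leq K + \max_{y \in \{0,1\}^K} (|B_y| - 1)_+.
\end{equation*}
Two ingredients suffice: (a) along any root-to-leaf path in a PATRICIA tree the splitting bit positions are strictly increasing, so at most $K$ internal nodes on the path split at a bit position $\le K$; and (b) strings agreeing on the first $K$ bits cannot be separated by a split at a bit position $\le K$, so the subtree rooted at the first node on the path to leaf $i$ of bit position $>K$ is exactly the PATRICIA tree built on $B_y$ with $y=(\Xi^{(i)}_1,\ldots,\Xi^{(i)}_K)$, which has $|B_y|$ leaves and therefore height at most $|B_y|-1$.

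\textbf{Step 2 (diffuseness forces small cylinders).} I will show that $q^*_K := \max_y q_y \to 0$ as $K \to \infty$. If not, there is $\eta>0$ such that the set $T$ of finite strings $y$ with $q_y \geq \eta$ meets every level $\{0,1\}^K$; $T$ is closed under taking prefixes and has at most $\lfloor 1/\eta\rfloor$ strings at each level, so K\"onig's lemma yields an infinite nested chain $C_{y^{(1)}} \supset C_{y^{(2)}} \supset \cdots$ with $q_{y^{(K)}} \geq \eta$, whose intersection is a single point of $\mu$-mass $\geq \eta$---contradicting diffuseness.

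\textbf{Step 3 (concentration and conclusion).} Given $\eps>0$, choose $K = K(\mu,\eps)$ with $q^*_K \le \eps/4$. Each $|B_y|$ is $\mathrm{Bin}(n,q_y)$-distributed with mean $\le n\eps/4$, so Hoeffding's inequality gives $\Prob(|B_y| \ge n\eps/2) \le \exp(-n\eps^2/8)$; a union bound over the at most $2^K$ nonempty buckets yields $\Prob(\max_y |B_y| \ge n\eps/2) \le 2^K\exp(-n\eps^2/8)$, which is summable in $n$. Combined with Step 1 and $K/n \to 0$, Borel--Cantelli gives $\limsup_n H_n^\mu/n \le \eps$ a.s.; intersecting over a countable sequence $\eps\downarrow 0$ gives the almost-sure convergence. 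For the expectation, $H_n^\mu \le n$ yields $\Ex[H_n^\mu]/n \le \eps + \Prob(H_n^\mu \ge n\eps)$, whose $\limsup$ is $\eps$; sending $\eps \to 0$ finishes. I expect the subtree identification in Step 1 to be the only nonroutine point; the concentration bound and the K\"onig argument are standard.
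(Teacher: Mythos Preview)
Your proposal is correct and follows essentially the same approach as the paper: a level-$K$ bucketing bound $H_n^\mu \le K + \max_y |B_y|$, the observation that diffuseness forces $\max_y q_y \to 0$ (you use K\"onig's lemma where the paper uses compactness of $\{0,1\}^\N$ and Cantor's intersection theorem), a binomial concentration bound with a union bound over $2^K$ cylinders, and Borel--Cantelli. The only cosmetic difference is that for the expectation the paper invokes dominated convergence via the deterministic bound $H_n^\mu \le n-1$, whereas you use the equivalent direct estimate $\Ex[H_n^\mu]/n \le \eps + \Prob(H_n^\mu \ge n\eps)$.
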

\begin{thm}\label{thm_2}
For any sequence $\alpha=(\alpha_n)_{n\in \N}$ of positive numbers with  $\alpha_n \to \infty$ as $n \to \infty$ there exists a diffuse probability distribution $\nu=\nu^{(\alpha)}$ on $\{0,1\}^\N$ such that for all $n$ sufficiently large
\begin{align*}
\frac{\Ex[H^\nu_n]}{n/\alpha_n} \to \infty, \quad\mbox{ and } \quad \frac{H^\nu_n}{n/\alpha_n} \to \infty \mbox{ almost surely.}
\end{align*}
\end{thm}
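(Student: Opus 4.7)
\textbf{Plan of proof of Theorem~\ref{thm_2}.}
The construction is a countable diffuse mixture that forces many branchings along a single root-to-leaf path. For $i \in \N$, let $\mu_i$ be the push-forward of the uniform measure on $\{0,1\}^\N$ under $y \mapsto 0^i 1 y$, so $\mu_i$ is diffuse and supported on the cylinder $\{x : x_1 = \cdots = x_i = 0,\, x_{i+1} = 1\}$. These cylinders are pairwise disjoint, so for any probability sequence $(q_i)_{i \geq 1}$ (to be tailored to $\alpha$) the mixture $\nu := \sum_{i \geq 1} q_i \mu_i$ is diffuse.

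The key geometric bound is $H^\nu_n \geq M_n - 1$, where $M_n$ counts the distinct indices $i$ represented in the sample. Indeed, letting $i_1 < i_2 < \cdots < i_{M_n}$ be these indices, the trie node at depth $i_k$ along the all-zero path from the root is a branching node for every $k \leq M_n - 1$: type-$i_k$ samples produce bit $1$ at position $i_k + 1$, while samples of type exceeding $i_k$ produce bit $0$ there. Unary internal nodes collapse under the PATRICIA contraction, but these $M_n - 1$ branching nodes persist, yielding a root-to-leaf path of length at least $M_n - 1$.

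I then choose $(q_i)$ tailored to $\alpha$ so that $M_n/(n/\alpha_n) \to \infty$ in expectation. WLOG $\alpha$ is nondecreasing (and, after preliminary smoothing to tame large jumps, the consecutive ratios $\alpha_{L_{k+1}}/\alpha_{L_k}$ below are controlled). Pick a strictly increasing integer sequence $(L_k)_{k \geq 1}$ with $\alpha_{L_k} \geq k \cdot 2^k$; this exists because $\alpha_n \to \infty$. Partition $\N$ into disjoint blocks $B_k$ with $|B_k| = \lceil k L_k/(4 \alpha_{L_k}) \rceil$ and set $q_i = 1/L_k$ for $i \in B_k$. Then $\sum_i q_i \leq \sum_k k/(4\alpha_{L_k}) \leq \sum_k 2^{-k-2} < 1$, and a single extra diffuse component absorbs the remaining mass. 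For $n \in [L_k, L_{k+1}]$, block $B_k$ is essentially fully used, contributing at least $(1-1/e)|B_k| \gtrsim k L_k/\alpha_{L_k}$ to $\Ex[M_n]$, and block $B_{k+1}$ contributes its partial share $\sim n(k+1)/\alpha_{L_{k+1}}$. Dividing by $n/\alpha_n$, using $\alpha_n \geq \alpha_{L_k}$ for the first bound and $\alpha_{L_{k+1}}/\alpha_{L_k} \lesssim 2$ (post-smoothing) for the second, yields $\Ex[M_n]/(n/\alpha_n) \gtrsim k \to \infty$.

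For the almost-sure statement, $M_n$ is a $1$-Lipschitz function of the i.i.d.\ sample: changing one $\Xi^{(j)}$ alters $M_n$ by at most $1$. McDiarmid's bounded-differences inequality gives $\Prob(|M_n - \Ex[M_n]| \geq \Ex[M_n]/2) \leq 2\exp(-\Ex[M_n]^2/(2n))$. Along a geometrically spaced subsequence of $n$ this tail is summable, so Borel--Cantelli yields a.s. convergence along that subsequence; monotonicity of $H^\nu_n$ in $n$ then extends the convergence to every $n$. The main technical hurdle is precisely the bookkeeping that ensures $\Ex[M_n]$ dominates $n/\alpha_n$ \emph{uniformly} over $n \in [L_k, L_{k+1}]$ rather than only on the subsequence $L_k$, which is why the preliminary smoothing of $\alpha$ and the simultaneous use of the two adjacent blocks $B_k,\,B_{k+1}$ are needed.
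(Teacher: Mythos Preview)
Your plan is sound and rests on the same geometric observation as the paper: strings of the form $0^i 1 y$ force a branching node at depth $i$ along the all-zero spine, so the PATRICIA height is at least the number of distinct prefix-lengths $i$ present in the sample, minus one. Where you diverge is in how this is exploited. The paper uses a two-stage construction: an outer geometric variable $G$ selects a scale, and conditionally on $G=k$ the suffix is drawn from an auxiliary law $\mu_{A(k)}$ (itself of the $0^T1$-plus-uniform type with $T$ uniform on a large range). A short birthday computation (their Lemma~4) shows $\Ex[H^{\mu_N}_m]\ge m-2$ for $m\le N$, and the geometric outer layer automatically places about $4n/\alpha_n$ strings at the scale matched to $n$, giving $\Ex[H^\mu_n]\ge n/\alpha_n$ with no smoothing of $\alpha$ needed; the $\to\infty$ is then obtained by re-running the construction with $\log\alpha_n$ in place of $\alpha_n$. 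For the almost-sure part they invoke Devroye's concentration inequality for $H^\mu_n$ itself rather than for the proxy $M_n$.

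Your single-stage mixture with block-structured weights $(q_i)$ is more direct but pays for it with the bookkeeping you flag: the smoothing of $\alpha$ (legitimate, since replacing $\alpha$ by any smaller sequence tending to infinity only strengthens the conclusion) and the interpolation between blocks $B_k$ and $B_{k+1}$ to get uniformity over $n\in[L_k,L_{k+1}]$. Your McDiarmid argument for $M_n$ is correct ($M_n$ is indeed $1$-Lipschitz in each coordinate) and, combined with the monotonicity of $n\mapsto H^\nu_n$, gives the a.s.\ statement along the lines you sketch, provided the smoothed $\alpha$ is taken to grow slowly enough that $\Ex[M_n]^2/n\to\infty$. In short: both approaches work; the paper's separation into a clean inner lemma plus a geometric outer layer sidesteps the smoothing and the block interpolation, while your route is conceptually simpler (one mixture, one concentration inequality) at the cost of more delicate parameter-tuning.
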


 We call a law $\nu$ on $\{0,1\}^\N$ causing large expected heights $\Ex[H^\nu_n]$ {\em bad} since such laws are undesirable from the point of view of the efficiency of algorithms based on PATRICIA trees. The remaining part of the present note contains proofs of these two theorems.
\begin{remark}
 For the density model, which is a subclass of the diffuse distributions on $\{0,1\}^\N$, the asymptotics of Theorem \ref{thm_1} were obtained by Devroye \cite[page 419]{de92}. There, also bad distributions with asymptotic properties as in our Theorem \ref {thm_2}  are constructed for sequences $\alpha_n=n^\varepsilon$ with $0<\varepsilon<1$.
 \end{remark}

\section{Proofs}

\subsection{Proof of Theorem \ref{thm_1}}
We start with a technical observation:
\begin{lem}\label{lem_level_k}
Suppose $\mu$ is a diffuse probability distribution on $\{0,1\}^\N$, and let $\Xi=(\xi_i)_{i\in\N}$ be random with law $\mu$.
Then for all $\eps$ there exists $k = k(\eps) \in \N$ such that for any string $v=v_1\ldots v_k \in \{0,1\}^k$, $\p(\xi_1\ldots\xi_k=v_1\ldots v_k) <\eps$.
\end{lem}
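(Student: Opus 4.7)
The plan is to argue by contradiction, combining a monotonicity observation with König's lemma and the diffuseness of $\mu$. Set
\[
M_k := \max_{v \in \{0,1\}^k} \p(\xi_1\ldots\xi_k = v),
\]
and observe that every length-$(k+1)$ cylinder is contained in its unique length-$k$ prefix cylinder, so $(M_k)_{k\ge 1}$ is nonincreasing. It therefore suffices to prove $M_k \to 0$ as $k \to \infty$.

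Suppose for contradiction that $\delta := \lim_{k \to \infty} M_k > 0$, and introduce
\[
T_\delta := \Big\{ v \in \bigcup_{k \geq 0}\{0,1\}^k \,:\, \p(\xi_1\ldots\xi_{|v|} = v) \geq \delta \Big\}.
\]
The identity $\p(\xi_1\ldots\xi_k = v) = \p(\xi_1\ldots\xi_{k+1} = v0) + \p(\xi_1\ldots\xi_{k+1} = v1)$ forces $T_\delta$ to be prefix-closed, hence a subtree of the complete infinite rooted binary tree; and since $M_k \geq \delta$ for every $k \geq 1$, this subtree contains at least one node at every level.

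König's lemma, applied to the infinite finitely-branching rooted tree $T_\delta$, will then supply an infinite branch, that is, some $x = (x_1, x_2, \ldots) \in \{0,1\}^\N$ with $\p(\xi_1\ldots\xi_k = x_1\ldots x_k) \geq \delta$ for every $k \in \N$. Letting $k \to \infty$ and using continuity of $\p$ along the nested decreasing family of cylinders, whose intersection is the singleton $\{x\}$, yields $\p(\Xi = x) \geq \delta > 0$, contradicting the assumption that $\mu$ is diffuse. I do not anticipate any serious obstacle here: the argument is entirely elementary, and the only points requiring explicit verification are the prefix-closure of $T_\delta$ and the appeal to König's lemma (equivalently, to compactness of $\{0,1\}^\N$ in the product topology).
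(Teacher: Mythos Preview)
Your proof is correct and follows essentially the same strategy as the paper's: both argue by contradiction, extract an infinite string all of whose prefix cylinders have mass at least $\delta$ (respectively $\eps$), and conclude that this string is an atom, contradicting diffuseness. The only cosmetic difference is that you invoke K\"onig's lemma on the subtree $T_\delta$, whereas the paper phrases the same step via Cantor's intersection theorem for nested compact subsets of $\{0,1\}^\N$ --- as you yourself note, these are equivalent formulations of the compactness of $\{0,1\}^\N$.
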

\begin{proof}
Suppose for a contradiction that there exists $\eps > 0$ such that for all $k \in \N$ there is a string $v_1\ldots v_k \in \{0,1\}^k$ such that $\p(\xi_1\ldots\xi_k=v_1\ldots v_k) \ge \eps$. Then by a compactness argument shown below there exists an infinite string $v=(v_i)_{i\in\N} \in \{0,1\}^{\N}$ such that for all $k \in \N$, $\p(\xi_1\ldots\xi_k=v_1\ldots v_k) \ge \eps$. The events $\{\xi_1\ldots\xi_k=v_1\ldots v_k\}$ are decreasing in $k$, so this implies that
\[
\p(\Xi=v) = \lim_{k \to \infty}\p(\xi_1\ldots\xi_k=v_1\ldots v_k) \ge \eps\, ,
\]
which contradicts the assumption that $\mu$ is diffuse.

It remains to show the existence of the infinite string $v=(v_i)_{i\in\N} \in \{0,1\}^\N$ such that for all $k \in \N$, $\p(\xi_1\ldots\xi_k=v_1\ldots v_k) \ge \eps$. Consider $\{0,1\}$ as a topological space with the discrete topology (all subsets being open) and $\{0,1\}^\N$ as the product space with the product topology. As a product of compact spaces $\{0,1\}^\N$ is compact. It is also a Hausdorff space. The projections $\Pi_k:\{0,1\}^\N \to \{0,1\}^k$ given by
\[(v_i)_{i\in\N} \stackrel{\Pi_k}{\longmapsto} v_1\ldots v_k
\]
are continuous for all $k\in \N$. Hence, the set
\begin{align*}
V_k&:=\{(v_i)_{i\in\N}\in \{0,1\}^\N\,|\, \p(\xi_1\ldots\xi_k=v_1\ldots v_k) \ge \eps\}\\
&= \bigcup_{v_1\ldots v_k \in \{0,1\}^k \atop \p(\xi_1\ldots\xi_k=v_1\ldots v_k) \ge \eps} \Pi_k^{-1}(\{v_1\ldots v_k\})
\end{align*}
is closed and thus compact in $\{0,1\}^\N$. This implies that $(V_k)_{k\in \N}$ is a nested sequence of non-empty, compact sets. Now, Cantor's intersection theorem implies
\begin{align*}
\bigcap_{k=1}^\infty V_k \neq \emptyset.
\end{align*}
Any element $v$ of $\bigcap_{k=1}^\infty V_k$ has the desired property.
\end{proof}

\begin{proof}[Proof of Theorem \ref{thm_1}.] Fix a diffuse probability distribution $\mu$ on $\{0,1\}^\N$. Let $\Xi^{(j)}=(\xi^{(j)}_i)_{i\in\N}$ for $j\in\N$ be independent, identically distributed with law $\mu$ and denote by $T_n$ the PATRICIA tree built from $\Xi^{(1)},\ldots,\Xi^{(n)}$.

We first show that $H^\mu_n/n\to 0$ almost surely. Fix any $\eps \in (0,1/4)$.
Let $k=k(\eps)$ be as in Lemma \ref{lem_level_k}, so that for any string $v=v_1\ldots v_k \in \{0,1\}^k$, if $\Xi=(\xi_i)_{i \in \N}$ has law $\mu$ then $\p(\xi_1\ldots\xi_k=v_1\ldots v_k) <\eps$.
To prove $H^\mu_n/n\to 0$ almost surely we first show that
\begin{align}\label{as_conv}
\p(\exists\, n_0\, \forall\, n \ge n_0: H^\mu_n \le k+2\eps n)=1.
\end{align}
Note that if the event
\[
E_{n,k}:= \bigcup_{v_1\ldots v_k\in \{0,1\}^k} \{|\{1 \le j \le n: \xi^{(j)}_1\ldots\xi^{(j)}_k=v_1\ldots v_k\}| \ge 2\eps n\}
\]
does not occur then the subtrees of $T_n$ rooted at nodes $v \in \{0,1\}^k$ all have at most $2\eps n$ leaves and so height less than $2 \eps n$; thus if $E_{n,k}$ does not occur then $H^\mu_n \le k+2\eps n$. It follows that
\begin{align*}
\lefteqn{
\p(\exists\, n_0\, \forall\, n \ge n_0: H^\mu_n \le k+2\eps n)}\\
&\ge \p(E_{n,k}\mbox{ occurs for at most  finitely many values }n)\\
&= \p\left(\left(\limsup_{n\to\infty}E_{n,k}\right)^c\right)
\, ,
\end{align*}
so to prove (\ref{as_conv}) it suffices to show that the  probability of $\limsup_{n\to\infty}E_{n,k}$ is $0$. For this, simply note that
\begin{align*}
\p(E_{n,k})
& \le \sum_{v_1\ldots v_k \in \{0,1\}^k}
\p(|\{1 \le j \le n: \xi^{(j)}_1\ldots\xi^{(j)}_k=v_1\ldots v_k\}| \ge 2\eps n) \\
& \le 2^k \p(Y_n\ge 2\eps n)\, ,
\end{align*}
where $Y_n$ has the Binomial distribution Bin$(n,\eps)$;
the second inequality holds since the events that $\xi^{(j)}_1\ldots\xi^{(j)}_k=v_1\ldots v_k$ are independent for distinct $1 \le j \le n$, and each has probability at most $\eps$.
A Chernoff bound then gives
\[
\p(E_{n,k}) \le 2^k e^{-\eps n/2}.
\]
Since this is summable, it follows by the first Borel--Cantelli lemma that
\[
\p\left(\limsup_{n\to\infty}E_{n,k}\right) = 0,
\]
hence we obtain (\ref{as_conv}). Now, note that for any $m_0\in\N$,
\begin{align*}
\left\{\frac{H^\mu_n}{n}\to 0\right\}=\bigcap_{m=m_0}^\infty \bigcup_{n_0=1}^\infty \bigcap_{n=n_0}^\infty \left\{\frac{H^\mu_n}{n}\le \frac{3}{m}\right\}\, .
\end{align*}
 Thus, for $\eps = \frac{1}{m}$ with fixed $m\ge m_0$ we can choose $n$ sufficiently large so that $k(\eps)/n\le \eps$ and obtain
\begin{align*}
\left\{H^\mu_n\le k(\eps)+2\eps n\right\}\subset \left\{\frac{H^\mu_n}{n}\le \frac{3}{m}\right\}
\end{align*}
and see that (\ref{as_conv}) implies $H^\mu_n/n\to 0$ almost surely.

Finally, note that by construction of the PATRICIA tree we deterministically have $H^\mu_n\le n-1$, thus $H^\mu_n/n\le 1$. Hence, we obtain from $H^\mu_n/n\to 0$ almost surely and dominated convergence that $\Ex[H^\mu_n]/n\to 0$.
\end{proof}

\subsection{Proof of Theorem \ref{thm_2}}
As building blocks for our bad distributions we first define a set of auxiliary probability distributions $(\mu_N$, $N\in\N)$, on $\{0,1\}^\N$ as follows. For fixed $N\in\N$ we choose $T$ uniformly at random from $\{1,\ldots,N^2\}$. Independently of $T$, let $(B_i)_{i\in \N}$ be independent  Bernoulli$(\frac{1}{2})$-distributed random variables. Then define a sequence $(\vartheta_i)_{i \in \N}$ by
\begin{align}\label{def_xi}
 \vartheta_i=
 \begin{cases}
 0, & \mbox{if } i< T,   \\
 1, & \mbox{if } i= T, \\
 B_{i-T},& \mbox{if } i> T.
 \end{cases}
\end{align}
Now, $\mu_N$ is defined as the law of the string $\Theta=(\vartheta_i)_{i\in\N}$. Note that by definition $\mu_N$ is diffuse for all $N\in \N$. We use the notation
\begin{align*}
\langle\Theta\rangle:=\min\{i\in\N\,|\,\vartheta_i=1\}
\end{align*}
for the index of the first entry of $\Theta$ equal to $1$.
\begin{lem}\label{upper_bound} For any $n\in\{1,\ldots,N\}$ we have $\Ex[H^{\mu_N}_n] \ge n-2$.
\end{lem}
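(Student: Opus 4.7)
The plan is to exploit the two-layer structure of $\mu_N$: each sample consists of an initial run of $T-1$ zeros followed by a $1$ and then independent fair coin flips, with $T$ uniform on $\{1,\ldots,N^2\}$. I will establish the structural bound $H^{\mu_N}_n \ge K - 1$, where $K$ is the number of \emph{distinct} values among $T^{(1)},\ldots,T^{(n)}$, and then observe that for $n \le N$ the birthday-style bound $n^2 \le N^2$ forces $\Ex[K]$ to be very close to $n$.

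For the structural claim, order the distinct $T$-values as $t_1 < t_2 < \cdots < t_K$ and let $m_i$ be the multiplicity of $t_i$. Every string has the prefix $0^{t_1 - 1}$; the $m_1$ strings with $T^{(j)} = t_1$ branch off in the trie (their $\vartheta_{t_1}$ equals $1$), while the remaining $n - m_1$ strings with $T^{(j)} > t_1$ continue along the zero-child. Within the latter subtrie, the $m_2$ strings with $T^{(j)} = t_2$ next branch off at level $t_2$, and so on. After contracting the out-degree-one vertices, one obtains a ``left spine'' of $K - 1$ branching nodes in the PATRICIA tree, located at PATRICIA depths $0, 1, \ldots, K - 2$. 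The left child of the depth-$(K - 2)$ spine node is the root of the sub-PATRICIA tree built from the $m_K$ strings with $T = t_K$, and this root sits at PATRICIA depth $K - 1$; hence $T_n$ contains a leaf at depth at least $K - 1$.

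Granted the structural bound, we compute
\[
\Ex[K] \;=\; N^2 \left(1 - \left(1 - \frac{1}{N^2}\right)^n\right) \;\ge\; N^2 \left(1 - e^{-n/N^2}\right) \;\ge\; n - \frac{n^2}{2N^2},
\]
using the elementary inequality $1 - e^{-x} \ge x - x^2/2$ for $x \ge 0$. Since $n \le N$, the subtracted term is at most $1/2$, so $\Ex[K] \ge n - 1/2$ and therefore
\[
\Ex[H^{\mu_N}_n] \;\ge\; \Ex[K] - 1 \;\ge\; n - \frac{3}{2} \;\ge\; n - 2.
\]

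The only genuinely delicate step is tracking the correspondence between trie levels $t_1,\ldots,t_{K-1}$ and PATRICIA depths $0,\ldots,K-2$ in the second paragraph; both the final inequality and the estimate of $\Ex[K]$ are routine once the structural claim is in hand.
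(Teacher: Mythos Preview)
Your proof is correct and follows essentially the same approach as the paper: both argue that $H^{\mu_N}_n \ge K-1$ where $K$ is the number of distinct values among the $T^{(j)}$ (the paper writes this as $|A|$ with $A=\{\langle\Theta^{(j)}\rangle\}$, noting $\langle\Theta^{(j)}\rangle=T^{(j)}$), and then bound $\Ex[K]\ge n-n^2/(2N^2)\ge n-1$. The only differences are cosmetic: you supply a detailed justification of the structural bound that the paper simply asserts ``by construction of the PATRICIA tree,'' and you reach the estimate on $\Ex[K]$ via the exact occupancy formula and the inequality $1-e^{-x}\ge x-x^2/2$, whereas the paper bounds $n-K$ by the number of colliding pairs directly.
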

\begin{proof}
Let $1\le n\le N\in\N$ and $\Theta^{(1)},\ldots,\Theta^{(n)}$ be i.i.d.~with law $\mu_N$.
We consider the set $A:=\{ \langle\Theta^{(1)}\rangle,\ldots,\langle\Theta^{(n)}\rangle\}\subset\{1,\ldots,N^2\}$. By construction of the PATRICIA tree we have
\begin{align}\label{height_1}
H^{\mu_N}_n \ge |A|-1,
\end{align}
where $|A|$ denotes the cardinality of $A$, i.e., the number of distinct elements within the set $\{\langle\Theta^{(1)}\rangle,\ldots,\langle\Theta^{(n)}\rangle\}$. For all $1\le i<j\le n$ we have
$\Prob(\langle\Theta^{(i)}\rangle=\langle\Theta^{(j)}\rangle)=1/N^2$. Hence, we obtain
\begin{align}\label{height_2}
\Ex[|A|]\ge n - \Ex\left[\sum_{1\le i<j\le n}\mathbf{1}_{\{\langle\Theta^{(i)}\rangle=\langle\Theta^{(j)}\rangle\}}\right]\ge n- \frac{n^2}{2N^2}\ge n-1,
\end{align}
since $n\le N$. Now, \eqref{height_1} and \eqref{height_2} imply the assertion.
\end{proof}

\begin{proof}[Proof of Theorem \ref{thm_2}.]
Without loss of generality we may assume that $\alpha_n=o(n)$. There exists an $n_0\in\N$ such that $\alpha_n\ge 8$ for all $n\ge n_0$. We define  $\beta_n:=\lfloor \log_2 \alpha_n\rfloor -2$ and a sequence $(A(n))_{n\in \N}$ as a generalized inverse of $(\beta_n)_{n\in \N}$ by
\begin{align}\label{def:Ak}
A(n):=\max\{m\in\N\,|\, \beta_m\le n\},\quad n\in\N.
\end{align}
First, a probability distribution $\mu^{(\alpha)}$ on $\{0,1\}^\N$ is obtained in two stages. Let $G$ be a random variable with geometric distribution with parameter $\frac{1}{2}$, i.e., with $\Prob(G=k)=(\frac{1}{2})^k$ for $k\in\N$.  Then define a sequence $(\lambda_i)_{i \in \N}$ by
\begin{align}\label{def_phi}
\lambda_i=\left\{\begin{array}{cl}
0, & \mbox{if } i< G, \\
1, & \mbox{if } i= G, \\
\vartheta_{i-G},& \mbox{if } i> G,
\end{array}
 \right.
\end{align}
where $\Theta=(\vartheta_i)_{i\in\N}$, conditional on $\{G=k\}$, has law $\mu_{A(k)}$  defined in \eqref{def_xi} with $A(\cdot)$ defined in \eqref{def:Ak}. We then define $\mu=\mu^{(\alpha)}$ as the law of $\Lambda=(\lambda_i)_{i\in \N}$.
Since the $\mu_{A(k)}$ are diffuse, we obtain that $\mu$ is diffuse.

Now, let $\Lambda^{(j)}=(\lambda^{(j)}_i)_{i\in\N}$ for $j \in \N$ be independent with law $\mu$. For $n\ge n_0$, by construction,
\begin{align*}
X_n := \left|\left\{1\le j\le n \,:\, \left(\lambda^{(j)}_1,\ldots,\lambda^{(j)}_{\beta_n}\right)=(0,\ldots,0,1)\right\}\right|
\end{align*}
is $\mathrm{Bin}(n,2^{-\beta_n})$-distributed. To get rid of the floors in the definition of $\beta_n$ denote by $X'_n$ a $\mathrm{Bin}(n,4/\alpha_n)$-distributed random variable. Note that $2^{-\beta_n}\ge 4/\alpha_n$. By Okamoto's inequality, see \cite{ok58} or \cite[Exercise 2.12]{boluma13}  we have
\begin{align*}
\Prob\left(X_n< \frac{2n}{\alpha_n}\right)&\le \Prob\left(X'_n-\frac{4n}{\alpha_n}< -\frac{2n}{\alpha_n}\right)\\
&\le \exp\left(-\frac{n(2/\alpha_n)^2}{2(4/\alpha_n)(1-4/\alpha_n)}\right) \le\exp\left(-\frac{n}{2\alpha_n}\right).
\end{align*}
Hence, with high probability at least $\lceil 2n/\alpha_n\rceil$ of the $n$ strings start with the prefix $(0,\ldots,0,1)$ of length $\beta_n$ and thus have suffixes $(\lambda^{(j)}_{\beta_n+1},\lambda^{(j)}_{\beta_n+2},\ldots)$ drawn independently from $\mu_{A(\beta_n)}$ for the respective $j$. For all $n\ge n_0$ we have  $\lceil 2n/\alpha_n\rceil\le n \le A(\beta_n)$. Hence, by Lemma \ref{upper_bound}, $\lceil 2n/\alpha_n\rceil$ such strings cause an expected height of at least $2n/\alpha_n-2$. Together we obtain for all sufficiently large $n$, note also $\alpha_n=o(n)$, that
\begin{align*}
\Ex[H^{\mu}_n]&\ge \Prob\left(X_n\ge \frac{2n}{\alpha_n}\right)\Ex\left[H^{\mu}_n\Big|X_n\ge \frac{2n}{\alpha_n}\right]\\
&\ge
\left(1-\exp\left(-\frac{n}{2\alpha_n}\right)\right)\left(\frac{2n}{\alpha_n}-2\right)\\
&\ge \frac{n}{\alpha_n}.
\end{align*}
Since the sequence $(\log \alpha_n)$ tends to infinity the present proof implies the existence of a diffuse  probability distribution $\nu=\nu^{(\alpha)}$ on $\{0,1\}^\N$ such that $\Ex[H^{\nu}_n]\ge n/\log \alpha_n$ for all sufficiently large $n\in\N$, hence
\begin{align*}
\frac{\Ex[H^\nu_n]}{n/\alpha_n}\to \infty.
\end{align*}

To prove the second statement of Theorem \ref{thm_2} we use the following bound from Devroye \cite[page 21]{de05}: for any diffuse probability distribution $\mu$ on $\{0,1\}^\N$ and any $t>0$,
\begin{align}\label{dev_bound}
\Prob\left(H^\mu_n \le \Ex[H^\mu_n] -t\right)\le \exp\left(-\frac{t^2}{2\Ex[H^\mu_n+1]}\right)\le \exp\left(-\frac{t^2}{2n}\right).
\end{align}
We now consider the probabilities
\begin{align}\label{dev_bound_b}
\Prob\left(H^{\nu}_n \le \frac{n}{\log^2 \alpha_n}\right)= \Prob\left(H^\nu_n \le \Ex[H^{\nu}_n]-\left(\Ex[H^{\nu}_n]-\frac{n}{\log^2 \alpha_n} \right)\right)
\end{align}
and note that for all sufficiently large $n$ we have
\begin{align} \label{dev_bound_c}
\Ex[H^{\nu}_n]-\frac{n}{\log^2 \alpha_n}\ge \frac{n}{\log \alpha_n}-\frac{n}{\log^2 \alpha_n}=
n\,\frac{\log(\alpha_n) -1}{\log^2 \alpha_n}\ge \frac{n}{\log^2 n}.
\end{align}
Combining (\ref{dev_bound})--(\ref{dev_bound_c}) we obtain
\begin{align*}
\Prob\left(H^{\nu}_n\le \frac{n}{\log^2 \alpha_n}\right)&\le \exp\left(-\frac{n}{2\log^4 n}\right)
\end{align*}
for all sufficiently large $n$. Since these upper bounds are summable it follows from the  first  Borel--Cantelli Lemma that $\liminf_{n\to\infty} H^{\nu}_n/(n/\log^2 \alpha_n) \ge 1$ almost surely, hence
\begin{align*}
\frac{H_n^{\nu}}{n/\alpha_n} \to  \infty \mbox{ almost surely.}
\end{align*}
Thus, $\nu$ has the properties claimed in Theorem \ref{thm_2}.
 \end{proof}

\subsubsection*{Acknowledgement}
This research was mainly done during the Sixteenth Annual Workshop on Probability and Combinatorics at McGill University's Bellairs Institute in Holetown, Barbados. We thank Bellairs Institute for its hospitality and support. We also thank the referees and Jasmin Straub for comments on a draft of this note.

\bibliographystyle{amsplain}

\begin{thebibliography}{10}

\bibitem{ak22}
Ali Akhavi, Fr\'{e}deric Paccaut, and Brigitte Vall\'{e}e, \emph{{Building
  Sources of Zero Entropy: Rescaling and Inserting Delays}}, 33rd International
  Conference on Probabilistic, Combinatorial and Asymptotic Methods for the
  Analysis of Algorithms (AofA 2022) (Dagstuhl, Germany) (Mark~Daniel Ward,
  ed.), Leibniz International Proceedings in Informatics (LIPIcs), vol. 225,
  Schloss Dagstuhl -- Leibniz-Zentrum f{\"u}r Informatik, 2022, pp.~1:1--1:28.

\bibitem{boluma00}
St\'{e}phane Boucheron, G\'{a}bor Lugosi, and Pascal Massart, \emph{A sharp
  concentration inequality with applications}, Random Structures Algorithms
  \textbf{16} (2000), no.~3, 277--292. \MR{1749290}

\bibitem{boluma13}
\bysame, \emph{Concentration inequalities}, Oxford University Press, Oxford,
  2013, A nonasymptotic theory of independence, With a foreword by Michel
  Ledoux. \MR{3185193}

\bibitem{clflva01}
Julien Cl\'{e}ment, Philippe Flajolet, and Brigitte Vall\'{e}e, \emph{Dynamical
  sources in information theory: a general analysis of trie structures},
  vol.~29, 2001, Average-case analysis of algorithms (Princeton, NJ, 1998),
  pp.~307--369. \MR{1887308}

\bibitem{de92b}
Luc Devroye, \emph{A note on the probabilistic analysis of patricia trees},
  Random Structures \& Algorithms \textbf{3} (1992), no.~2, 203--214.

\bibitem{de92}
\bysame, \emph{A study of trie-like structures under the density model}, Ann.
  Appl. Probab. \textbf{2} (1992), no.~2, 402--434. \MR{1161060}

\bibitem{de98}
\bysame, \emph{Universal limit laws for depths in random trees}, SIAM Journal
  on Computing \textbf{28} (1998), no.~2, 409--432.

\bibitem{de05}
\bysame, \emph{Universal asymptotics for random tries and {PATRICIA} trees},
  Algorithmica \textbf{42} (2005), no.~1, 11--29. \MR{2131826}

\bibitem{evwa17}
Steven~N. Evans and Anton Wakolbinger, \emph{{Radix sort trees in the large}},
  Electronic Communications in Probability \textbf{22} (2017), no.~none, 1 --
  13.

\bibitem{evwa20}
\bysame, \emph{P{ATRICIA} bridges}, Genealogies of interacting particle
  systems, Lect. Notes Ser. Inst. Math. Sci. Natl. Univ. Singap., vol.~38,
  World Sci. Publ., Hackensack, NJ, [2020] \copyright 2020, pp.~231--265.
  \MR{4448159}

\bibitem{fuhwza13}
Michael Fuchs, Hsien-Kuei Hwang, and Vytas Zacharovas, \emph{An analytic
  approach to the asymptotic variance of trie statistics and related
  structures}, ArXiv \textbf{abs/1303.4244} (2013).

\bibitem{gw68}
Gernot Gwehenberger, \emph{Anwendung einer bin\"aren {V}erweiskettenmethode
  beim {A}ufbau von {L}isten}, Elektronische {R}echenanlagen \textbf{10}
  (1968), 223--226.

\bibitem{isch23}
Jasper Ischebeck, \emph{Central limit theorems for fringe trees in patricia
  tries}, Preprint arXiv:2305.14900 [math.PR] (2023).

\bibitem{ja12}
Svante Janson, \emph{Renewal theory in the analysis of tries and strings},
  Theor. Comput. Sci. \textbf{416} (2012), 33–54.

\bibitem{ja22}
\bysame, \emph{{Central limit theorems for additive functionals and fringe
  trees in tries}}, Electronic Journal of Probability \textbf{27} (2022),
  no.~none, 1 -- 63.

\bibitem{knsz02}
Charles Knessl and Wojciech Szpankowski, \emph{Limit laws for the height in
  {PATRICIA} tries}, vol.~44, 2002, Analysis of algorithms, pp.~63--97.
  \MR{1932678}

\bibitem{kn73}
Donald~E. Knuth, \emph{The art of computer programming. {V}olume 3},
  Addison-Wesley Series in Computer Science and Information Processing,
  Addison-Wesley Publishing Co., Reading, Mass.-London-Don Mills, Ont., 1973,
  Sorting and searching. \MR{445948}

\bibitem{kn98}
\bysame, \emph{The art of computer programming. {V}ol. 3}, Addison-Wesley,
  Reading, MA, 1998, Sorting and searching, Second edition [of MR0445948].
  \MR{3077154}

\bibitem{lenesz13}
Kevin Leckey, Ralph Neininger, and Wojciech Szpankowski, \emph{Towards more
  realistic probabilistic models for data structures: The external path length
  in tries under the markov model}, pp.~877--886, Society for Industrial and
  Applied Mathematics, 2013.

\bibitem{ma92}
Hosam~M. Mahmoud, \emph{Evolution of random search trees}, Wiley-Interscience
  Series in Discrete Mathematics and Optimization, John Wiley \& Sons, Inc.,
  New York, 1992, A Wiley-Interscience Publication. \MR{1140708}

\bibitem{mo68}
Donald~R. Morrison, \emph{Patricia—practical algorithm to retrieve
  information coded in alphanumeric}, Journal of the ACM \textbf{15} (1968),
  514--534.

\bibitem{ok58}
Masashi Okamoto, \emph{Some inequalities relating to the partial sum of
  binomial probabilities}, Ann. Inst. Statist. Math. \textbf{10} (1958),
  29--35. \MR{99733}

\bibitem{pi85}
B.~Pittel, \emph{Asymptotical growth of a class of random trees}, Ann. Probab.
  \textbf{13} (1985), no.~2, 414--427. \MR{781414}

\bibitem{pi86}
Boris Pittel, \emph{Paths in a random digital tree: limiting distributions},
  Adv. in Appl. Probab. \textbf{18} (1986), no.~1, 139--155. \MR{827333}

\bibitem{sz01}
Wojciech Szpankowski, \emph{Average case analysis of algorithms on sequences},
  Wiley-Interscience Series in Discrete Mathematics and Optimization,
  Wiley-Interscience, New York, 2001, With a foreword by Philippe Flajolet.
  \MR{1816272}

\end{thebibliography}
\providecommand{\bysame}{\leavevmode\hbox to3em{\hrulefill}\thinspace}
\providecommand{\MR}{\relax\ifhmode\unskip\space\fi MR }
\providecommand{\MRhref}[2]{%
  \href{http://www.ams.org/mathscinet-getitem?mr=#1}{#2}
}
\providecommand{\href}[2]{#2}

\end{document}